\documentclass[reqno,12pt,letterpaper]{amsart}
\usepackage[proof]{sdmacros}

\title{Dynamical zeta functions for Axiom A flows}
\author{Semyon Dyatlov}
\email{dyatlov@math.berkeley.edu}
\address{Department of Mathematics, University of California, Berkeley, CA 94720}
\author{Colin Guillarmou}
\email{cguillar@math.cnrs.fr}
\address{Laboratoire de Math\'ematiques d'Orsay,
Facult\'e des Sciences d'Orsay Universit\'e Paris-Sud,
F-91405 Orsay Cedex}

\newtheorem*{theo*}{Theorem}
\newtheorem{exam}[prop]{Example}

\begin{document}

\begin{abstract}
We show that the Ruelle zeta function of any smooth Axiom A flow with orientable
stable/unstable spaces has a meromorphic
continuation to the entire complex plane. The proof uses the meromorphic continuation
result of~\cite{rnc} together with~\cite{ConleyEaston,GMT} which imply that
every basic hyperbolic set can be put into the framework of~\cite{rnc}.
\end{abstract}

\maketitle

\addtocounter{section}{1}
\addcontentsline{toc}{section}{1. Introduction}

Let $\mathcal M$ be a compact manifold and $X$ be a $C^\infty$ vector field on $\mathcal M$.
Denote by $\varphi^t=\exp(tX)$ the corresponding flow.
In this note we prove
\begin{theo*}
Assume that $\varphi^t$ is an Axiom A flow
(see Definition~\ref{d:axiom-a}) with orientable
stable/unstable spaces.
Define the \textbf{Ruelle zeta function}
\begin{equation}
  \label{e:zeta-r}
\zeta(\lambda)=\prod_{\gamma^\sharp}\big(1-e^{-\lambda T_{\gamma^\sharp}}\big),\quad
\Re\lambda\gg 1,
\end{equation}
where the product is taken over the primitive closed orbits $\gamma^\sharp$ of $\varphi^t$,
with the exception of fixed points,
and $T_{\gamma^\sharp}$ are the corresponding periods.
Then $\zeta(\lambda)$ extends meromorphically to $\lambda\in\mathbb C$.
\end{theo*}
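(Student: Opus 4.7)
The plan is to reduce the problem, via Smale's spectral decomposition, to the case of a single basic hyperbolic set and then invoke the meromorphic continuation result of~\cite{rnc} for flows on open manifolds whose trapped set is a compact hyperbolic invariant set.

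First I would apply Smale's spectral decomposition to the non-wandering set, writing
\[
\NW(\varphi^t) = K_0 \sqcup K_1 \sqcup \cdots \sqcup K_N,
\]
where $K_0$ is the finite collection of isolated fixed points excluded from~\eqref{e:zeta-r} and each $K_j$, $j\geq 1$, is a basic hyperbolic set containing no fixed point. Since every primitive closed orbit contributing to $\zeta$ lies in exactly one $K_j$, the defining product factors for $\Re\lambda \gg 1$ as
\[
\zeta(\lambda) = \prod_{j=1}^{N} \zeta_j(\lambda), \qquad
\zeta_j(\lambda) = \prod_{\gamma^\sharp \subset K_j}\big(1-e^{-\lambda T_{\gamma^\sharp}}\big).
\]
It therefore suffices to establish a meromorphic continuation of each factor $\zeta_j$ separately.

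Next, for a fixed basic hyperbolic set $K_j$, I would use the isolating-block constructions of Conley--Easton~\cite{ConleyEaston} and~\cite{GMT} to produce an open neighborhood $U_j \supset K_j$ with piecewise smooth boundary transverse to $X$ on the appropriate faces, such that the maximal $\varphi^t$-invariant subset of $U_j$ (equivalently, the trapped set of the restricted flow on $U_j$) coincides with $K_j$. The hyperbolic splitting and its global orientability then transfer from $K_j$ to the trapped set of the localized system. At this stage $(U_j, X|_{U_j}, K_j)$ fits the framework of~\cite{rnc}, whose main result yields a meromorphic extension of $\zeta_j(\lambda)$ to all of $\mathbb{C}$. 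Taking the product over $j$ gives the desired continuation of $\zeta$.

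The step I expect to be the main obstacle is the second one: verifying that the isolating neighborhoods produced by~\cite{ConleyEaston,GMT} satisfy the precise hypotheses of~\cite{rnc}, and checking that the zeta function associated to the open system $(U_j, X|_{U_j})$ in the sense of~\cite{rnc} really coincides with the product $\zeta_j$ defined above (in other words, that no spurious or missing orbits arise from the localization). Once this dictionary between Axiom A basic sets and the open-hyperbolic-system framework of~\cite{rnc} is in place, the remaining manipulations of convergent Euler products are routine.
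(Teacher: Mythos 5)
Your proposal follows the paper's strategy exactly: Smale's spectral decomposition of the non-wandering set into basic sets, Conley--Easton isolating blocks refined via~\cite{GMT}, and the open-system meromorphic continuation theorem of~\cite{rnc}. One clarification is worth making about the step you flag as the obstacle: the object fed into~\cite{rnc} is not $(U_j, X|_{U_j})$ with a piecewise smooth boundary transverse to $X$. Rather, the Conley--Easton isolating block has a smooth boundary that is merely \emph{topologically} locally convex at the tangency locus $\partial_0 U_j = \{\rho=0,\ X\rho=0\}$, and~\cite{GMT} shows one must \emph{modify} the vector field to a new $X_0$ near $\partial_0 U_j$ (keeping $X_0 = X$ in a neighborhood of $K_j$) so as to obtain the quadratic strict convexity $X_0\rho=0 \Rightarrow X_0^2\rho<0$ that the hypotheses of~\cite{rnc} require. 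Your worry about spurious or missing orbits is then resolved immediately: the construction arranges $K_j = \bigcap_{t\in\mathbb R}\varphi_0^t(\overline{U_j})$, so every closed orbit of the modified flow $\varphi_0^t$ in $\overline{U_j}$ lies in $K_j$ and hence is a closed orbit of the original flow, making the open-system zeta function coincide with $\zeta_j$.
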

We refer the reader to~\S\ref{s:dynam} for examples of Axiom A flows
and for the dynamical systems terminology used here.

\renewcommand*{\thefootnote}{$\dagger$}
The above theorem was first conjectured by Smale~\cite[bottom of page~802]{Smale}.%
\footnote{Strictly speaking, \cite{Smale} considers a subclass of Axiom A flows,
namely perturbations of suspensions of certain Axiom A maps. He says
`I must admit a positive answer [to the question of meromorphy] would be a little shocking!'
}
It was proved in the special case of Anosov flows by Giulietti--Liverani--Pollicott~\cite{GLP}.
Dyatlov--Zworski~\cite{zeta} gave a simple microlocal proof in the Anosov case.
In our previous work~\cite{rnc}, we generalized the microlocal techniques used in~\cite{zeta} to handle the case of \emph{open hyperbolic systems}, defined as locally
maximal hyperbolic sets with a smooth neighborhood on which the flow is strictly convex.
We emphasize that the introduction of microlocal tools and the use of escape functions 
for the study of hyperbolic diffeomorphisms and flows appeared first in the works of Faure-Roy-Sj\"ostrand \cite{FRS} and Faure-Sj\"ostrand \cite{FaSj}; they were inspired by the works of Kitaev \cite{Ki}, Blank-Keller-Liverani \cite{BKL}, Liverani \cite{Li}, Gouezel-Liverani \cite{GoLi}, Baladi-Tsujii \cite{BaTs} that used anisotropic Sobolev/H\"older spaces adapted to the dynamics. 
The meromorphic extension of the zeta function for a particular case of Axiom A flow, namely the Grassmanian extension of a contact Anosov flow, was also proved by Faure-Tsujii \cite{FaTs}.

See the introduction to~\cite{rnc} for an overview of results on dynamical
zeta functions and Pollicott--Ruelle resonances
and the book by Baladi~\cite{BaladiBook} for the related case
of hyperbolic maps.

The proof in this note combines~\cite{rnc} with the results of Conley--Easton~\cite{ConleyEaston} and Guillarmou--Mazzucchelli--Tzou~\cite{GMT} which show that every locally maximal hyperbolic set
has a smooth neighborhood with a strictly convex flow, as well as with Smale's spectral decomposition
for Axiom~A flows.

We note that just like in~\cite{rnc} one can consider a more general Ruelle zeta function
with a potential. In fact, the methods of~\cite{rnc} apply to even more general
dynamical traces and twisted zeta functions associated to the action of $X$ on vector bundles,
see~\cite[\S5.1, Theorem~4]{rnc}. Moreover, the assumption that $X$ is $C^\infty$
can be relaxed to $C^k$ for large $k=k(C_0)$ if we only want to continue the Ruelle zeta function
to a half-plane $\{\Re\lambda\geq -C_0\}$. Finally, the orientability hypothesis holds
in many natural cases (such as geodesic flows on orientable negatively curved manifolds)
and can be removed under certain topological assumptions by using a twisted zeta function,
see~\cite[Appendix~B]{GLP}.

\section{Review of hyperbolic dynamics}
  \label{s:dynam}

In this section we briefly review several standard definitions and facts
from the theory of hyperbolic dynamical systems. We refer the reader
to~\cite{KaHa} for a comprehensive treatment of hyperbolic dynamics.

Assume that $\mathcal M$ is a compact $C^\infty$ manifold without boundary
and $\varphi^t=\exp(tX)$ is a $C^\infty$ flow generated by a vector field~$X$.
\begin{defi}
  \label{d:hyperbolic}
Let $\mathcal K\subset\mathcal M$ be a compact $\varphi^t$-invariant set.
We say that $\mathcal K$ is \textbf{hyperbolic}
for the flow $\varphi^t$, if the generator $X$ of the flow does not vanish on $\mathcal K$
and each tangent space $T_x\mathcal M$, $x\in\mathcal K$, admits a flow/stable/unstable decomposition
$$
T_x\mathcal M=E_0(x)\oplus E_s(x)\oplus E_u(x),\quad
x\in \mathcal K
$$
such that:
\begin{itemize}
\item $E_0(x)=\mathbb R X(x)$;
\item $E_s(x),E_u(x)$ depend continuously on the point $x$;
\item $d\varphi^t(E_s(x))=E_s(\varphi^t(x))$ and $d\varphi^t(E_u(x))=E_u(\varphi^t(x))$
for all $x\in\mathcal K$, $t\in\mathbb R$;
\item for any choice of a continuous norm $|\bullet|$ on the fibers of $T\mathcal M$,
there exist constants $C,\theta>0$ such that for all $x\in\mathcal K$,
\begin{equation}
  \label{e:hyperb}
|d\varphi^t(x)v|\leq Ce^{-\theta|t|}|v|\quad\text{when}\quad
\begin{cases}
t\geq 0,& v\in E_s(x);\\
t\leq 0,& v\in E_u(x).
\end{cases}
\end{equation}
\end{itemize}
We say $\varphi^t$ is an \textbf{Anosov flow} if the whole $\mathcal M$ is hyperbolic.
\end{defi}
A fixed point $x\in\mathcal M$, $X(x)=0$, is called \emph{hyperbolic}
if the differential $\nabla X(x)$ has no eigenvalues on the imaginary axis.
Hyperbolic fixed points are nondegenerate and thus isolated.

We also define the \emph{nonwandering set}:
\begin{defi}\cite[p.~796]{Smale}
We call $x\in\mathcal M$ a \textbf{nonwandering point} if for every neighborhood
$V$ of $x$ and every $T>0$ there exists $t\in\mathbb R$ such that
$|t|\geq T$ and $\varphi^t(V)\cap V\neq\emptyset$.
The set of all nonwandering points is called the \textbf{nonwandering set}.
\end{defi}
The nonwandering set is closed and $\varphi^t$-invariant,
see~\cite[Proposition~3.3.4]{KaHa}. Note that each closed
orbit of $\varphi^t$ lies in the nonwandering set.

We now give the definition of Axiom A flows:
\begin{defi}\cite[\S II.5, (5.1)]{Smale}
  \label{d:axiom-a}
The flow $\varphi^t$ is \textbf{Axiom A} if:
\begin{enumerate}
\item the nonwandering set is the disjoint union of the set $\mathcal F$ of fixed
points and the closure $\mathcal K$ of the union of all closed orbits;
\item all fixed points of $\varphi^t$ are hyperbolic;
\item the set $\mathcal K$ is hyperbolic for the flow $\varphi^t$.
\end{enumerate}
\end{defi}
We also define locally maximal sets and basic hyperbolic sets:
\begin{defi}
We say that a compact $\varphi^t$-invariant set $K\subset \mathcal M$ is \textbf{locally maximal}
for the flow $\varphi^t$, if there exists a neighborhood $V$ of $K$ such that
$$
K=\bigcap_{t\in\mathbb R}\varphi^t(V).
$$
\end{defi}
\begin{defi}\cite[Chapter~9]{ParryPollicott}
A compact $\varphi^t$-invariant set $K\subset\mathcal M$ is called a \textbf{basic hyperbolic set\/}%
\footnote{Strictly speaking,
a single closed orbit is not considered a basic set,
but we ignore this minor detail here.}
for $\varphi^t$ if\begin{enumerate}
\item $K$ is locally maximal for $\varphi^t$;
\item $K$ is hyperbolic for $\varphi^t$;
\item the flow $\varphi^t|_K$ topologically transitive (i.e. contains a dense orbit); and
\item $K$ is the closure of the union of all closed orbits of $\varphi^t|_K$.
\end{enumerate}
\end{defi}
The nonwandering set of an Axiom A flow has the following
\emph{spectral decomposition}:

\begin{prop}\cite[\S II.5, Theorem~5.2]{Smale}
  \label{l:spectral}
Assume that $\varphi^t$ is an Axiom A flow and let $\mathcal K$ be given
by Definition~\ref{d:axiom-a}. Then we can write $\mathcal K$ as a finite disjoint union
$$
\mathcal K=K_1\sqcup\dots\sqcup K_N
$$
where each $K_j$ is a basic hyperbolic set.
\end{prop}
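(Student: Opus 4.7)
The plan is to follow Smale's original strategy: partition $\mathcal K$ into finitely many basic pieces by defining an equivalence relation on its dense set of closed orbits and then taking closures. All the dynamical ingredients needed — the stable/unstable manifold theorem, local product structure, the Anosov closing lemma, the shadowing lemma, and the inclination ($\lambda$-)lemma — are available because $\mathcal K$ is hyperbolic in the sense of Definition~\ref{d:hyperbolic}.

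First I would set up local stable and unstable manifolds $W^s_\varepsilon(x)$ and $W^u_\varepsilon(x)$ for $x\in\mathcal K$, which by hyperbolicity exist with a uniform size $\varepsilon>0$ and depend continuously on the basepoint. Standard arguments then give \emph{local product structure}: for $x,y\in\mathcal K$ sufficiently close, $W^u_\varepsilon(x)\cap \bigcup_{|\tau|\le \varepsilon}W^s_\varepsilon(\varphi^\tau(y))$ consists of a single point, continuous in $(x,y)$, whose forward and backward orbits shadow those of $y$ and $x$ respectively. I would then declare two closed orbits $\gamma_1,\gamma_2\subset\mathcal K$ equivalent if there exist heteroclinic orbits in both directions, i.e.\ both $W^u(\gamma_1)\cap W^s(\gamma_2)$ and $W^u(\gamma_2)\cap W^s(\gamma_1)$ contain a point of $\mathcal K$ other than $\gamma_1,\gamma_2$.

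Reflexivity and symmetry are immediate; transitivity is obtained by splicing heteroclinic orbits together using local product structure and the $\lambda$-lemma. To see that there are only finitely many equivalence classes, I would cover $\mathcal K$ by finitely many product boxes $R_1,\dots,R_M$ of diameter less than the product-structure scale; any two closed orbits visiting a common $R_i$ are directly seen to be equivalent via local product structure, so the equivalence class of a closed orbit is determined by the subset of boxes it visits, and there are only finitely many such subsets. Letting $\Gamma_1,\dots,\Gamma_N$ be the equivalence classes of closed orbits and setting $K_j:=\overline{\bigcup\Gamma_j}$, one obtains a decomposition $\mathcal K=K_1\cup\dots\cup K_N$. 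Pairwise disjointness follows because distinct classes are contained in disjoint unions of open boxes, so their closures stay at positive distance.

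It remains to verify that each $K_j$ is a basic hyperbolic set. Hyperbolicity is inherited from $\mathcal K$, and $K_j$ is the closure of its closed orbits by construction. Topological transitivity on $K_j$ follows from the shadowing lemma applied to the heteroclinic chains witnessing equivalence between any two periodic orbits of $\Gamma_j$: one obtains true orbits of $\varphi^t$ that pass arbitrarily close to prescribed finite families of points in $K_j$, and a Baire category argument produces a dense orbit. The subtlest point is \textbf{local maximality}: one must show that any orbit staying in a small enough neighborhood of $K_j$ is already contained in $K_j$, which I would establish by shadowing such an orbit by closed orbits of $\varphi^t$ and noting that those closed orbits must lie in $\Gamma_j$. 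I expect this local-maximality argument, together with the bookkeeping needed for the transitivity of the equivalence relation, to be the main technical obstacle; it is also precisely what places each $K_j$ into the setting of \cite{rnc}.
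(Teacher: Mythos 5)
The paper does not prove this proposition: it is cited from Smale~\cite{Smale} (\S II.5, Theorem~5.2), with pointers to~\cite[Exercise~18.3.7]{KaHa} for a proof of the decomposition and to~\cite[Lemma~3.9]{BowenLec} for the local maximality of the nonwandering set. Your outline is Smale's original argument, which is precisely the proof the citation refers to, so there is no competing approach in the paper to compare against.

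As a sketch it is sound in outline, but it glosses over the one step that carries the real weight. You repeatedly invoke local product structure \emph{inside} $\mathcal K$: to deduce that two closed orbits through a common box are heteroclinically related, to chain heteroclinics for transitivity, and implicitly in the shadowing argument for local maximality. For a general hyperbolic set the bracket point $[x,y]\in W^u_\varepsilon(x)\cap\bigcup_{|\tau|\le\varepsilon}W^s_\varepsilon(\varphi^\tau(y))$ exists in $\mathcal M$ but need not lie in $\mathcal K$. Here it does because closed orbits are dense in $\mathcal K$ and the shadowing lemma produces, arbitrarily near $[x,y]$, closed orbits (which lie in $\mathcal K$), so $[x,y]$ belongs to the closure of the closed orbits, i.e.\ to $\mathcal K$. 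This intrinsic local product structure of $\mathcal K$ is exactly what makes the chaining and the local maximality of $\mathcal K$ (and hence of each $K_j$) go through, so it should be stated as a lemma rather than absorbed into ``standard arguments''; you rightly flag local maximality as the subtle point, but the same issue already appears in the earlier steps. One small wording fix: requiring the heteroclinic point to be \emph{other than} $\gamma_1,\gamma_2$ makes reflexivity fail when a basic set consists of a single closed orbit with no homoclinic points; the standard relation is simply $W^u(\gamma_1)\cap W^s(\gamma_2)\neq\emptyset$ and $W^s(\gamma_1)\cap W^u(\gamma_2)\neq\emptyset$, which is reflexive because $\gamma\subset W^u(\gamma)\cap W^s(\gamma)$.
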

See also~\cite[Exercise~18.3.7]{KaHa} for a proof of the spectral decomposition,
and~\cite[Lemma~3.9]{BowenLec} for the local maximality of the nonwandering set.

We finally give a few examples:
\begin{exam}
  \label{x:simple}
Let $\mathcal M=\mathbb R^2_{x,y}/2\pi \mathbb Z^2$ be the torus and
$\varphi_t=\exp(tX)$ where $X=(\sin x)\partial_x+\partial_y$.
The nonwandering set $\mathcal K$ has two connected components,
each being a single closed orbit:
\begin{equation}
  \label{e:simple-dec}
\mathcal K=\{x=0\}\sqcup\{x=\pi\}.
\end{equation}
The spectral decomposition is given by~\eqref{e:simple-dec}.
Note however that the entire $\mathcal M$ is not hyperbolic for $\varphi^t$.
\end{exam}
\begin{exam}
Let $(M,g)$ be a (possibly noncompact) complete Riemannian manifold
of negative sectional curvature which is asymptotically hyperbolic.
Put $\mathcal M:=SM$, the sphere bundle of $M$,
and let $\varphi^t:\mathcal M\to\mathcal M$ be the geodesic flow.
Let $\mathcal K$ be the union of all geodesics which are trapped,
that is their closures in $SM$ are compact. Then
$\mathcal K$ is a locally maximal hyperbolic set for $\varphi^t$.
(The noncompactness of $\mathcal M$ is not an issue since
$\mathcal K$ is compact and the behavior of the flow
outside of a neighborhood of $\mathcal K$ is irrelevant.)
See~\cite[\S6.3]{rnc} for details and more general examples.
\end{exam}

\section{Proof of the theorem}

We first show meromorphic continuation of the Ruelle zeta function $\zeta_K$
for a locally maximal hyperbolic set:
\begin{prop}
  \label{l:basic-case}
Let $K\subset\mathcal M$ be a locally maximal hyperbolic set for $\varphi^t$.
Define the Ruelle zeta function $\zeta_K$ by~\eqref{e:zeta-r}
where we only take the closed trajectories of $\varphi^t$ which lie in $K$. Then
$\zeta_K$ continues to a meromorphic function on $\mathbb C$.
\end{prop}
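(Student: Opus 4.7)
The plan is to reduce Proposition~\ref{l:basic-case} directly to the main meromorphic continuation result of~\cite{rnc}, which applies to \emph{open hyperbolic systems}: locally maximal hyperbolic sets equipped with a smooth neighborhood on which the flow is strictly convex. The only ingredient missing from the hypotheses of Proposition~\ref{l:basic-case} is the existence of such a strictly convex neighborhood, and this is exactly what is supplied by the isolating-block constructions of~\cite{ConleyEaston,GMT}.

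Concretely, I would first invoke~\cite{ConleyEaston,GMT} to produce a smooth neighborhood $V$ of $K$ on which $\varphi^t$ is strictly convex, refining a neighborhood $V_0$ given by local maximality. By shrinking so that $V\subset V_0$, one still has $K=\bigcap_{t\in\mathbb R}\varphi^t(V)$, so the pair $(K,V)$ fits the framework of~\cite{rnc}. Next I would identify the Ruelle zeta function $\zeta_K$ with the zeta function treated there: a closed orbit of $\varphi^t$ lies in $K$ if and only if it remains in $V$ for all time (by $\varphi^t$-invariance together with $K=\bigcap_t\varphi^t(V)$), and $X$ cannot vanish on $K$ since $K$ is hyperbolic in the sense of Definition~\ref{d:hyperbolic}, so no fixed points appear in the product. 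Thus the product~\eqref{e:zeta-r} taken over trajectories in $K$ coincides with the Ruelle product handled by the main theorem of~\cite{rnc}, and meromorphic continuation to all of $\mathbb C$ follows.

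The only nontrivial step is the first one, producing the strictly convex neighborhood; but this has already been carried out in~\cite{ConleyEaston,GMT} for arbitrary locally maximal hyperbolic sets, so there is no real obstacle in this note. The rest of the argument should be a matter of matching definitions and conventions between Proposition~\ref{l:basic-case} and the statement in~\cite{rnc}.
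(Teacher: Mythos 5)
Your proposal follows essentially the same route as the paper: produce a strictly convex neighborhood of $K$ via the Conley--Easton/GMT constructions, identify $\zeta_K$ with the zeta function of the resulting open hyperbolic system, and apply the meromorphic continuation theorem of~\cite{rnc}. One point is stated imprecisely, however: the isolating block of Conley--Easton only gives a \emph{topological} convexity condition (orbits through $\partial_0\mathcal U$ immediately leave $\overline{\mathcal U}$), and the construction in~\cite{GMT} upgrades this to the quadratic differential convexity~\eqref{e:quadratic-convexity} required by~\cite{rnc} by \emph{modifying the vector field} $X$ to a new field $X_0$ near $\partial\mathcal U$, keeping $X_0=X$ near $K$. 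So it is the modified flow $\varphi^t_0=\exp(tX_0)$, not $\varphi^t$ itself, that is strictly convex on the neighborhood. This makes it necessary to verify (as the paper does via part~(3) of Lemma~\ref{l:convex-embed}) that the closed trajectories of $\varphi^t_0$ trapped in $\overline{\mathcal U}$ are exactly the closed trajectories of $\varphi^t$ in $K$, so that $\zeta_K$ really is the Ruelle zeta function of $(\mathcal U,\varphi^t_0)$. Since you already defer to~\cite{GMT} for the construction, this is a matter of sharpening the statement rather than a missing idea, but it is worth making explicit that the object handed to~\cite{rnc} carries a vector field agreeing with $X$ only near $K$.
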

The proof of Proposition~\ref{l:basic-case} relies on~\cite{rnc}. To reduce
to the case considered in~\cite{rnc} we use
\begin{lemm}
  \label{l:convex-embed}
Let $K\subset\mathcal M$ be a locally maximal hyperbolic set for $\varphi^t$. Then there exists a neighborhood
$\mathcal U$ of $K$ in $\mathcal M$ with $C^\infty$ boundary and a $C^\infty$ vector field $X_0$ on
$\overline{\mathcal U}$ such that:
\begin{enumerate}
\item the boundary $\partial\mathcal U$ is strictly convex with respect to $X_0$ in the following sense:
\begin{equation}
  \label{e:quadratic-convexity}
\rho(x)=0,\quad
X_0\rho(x)=0\quad\Longrightarrow\quad
X_0^2\rho(x)<0
\end{equation}
where $\rho\geq 0$ is any boundary defining function on $\mathcal U$;
\item $X_0=X$ in a neighborhood of $K$;
\item $K=\bigcap_{t\in\mathbb R}\varphi_0^t(\mathcal U)$ where
$\varphi_0^t:=\exp(tX_0)$ is the flow generated by $X_0$.
\end{enumerate}  
\end{lemm}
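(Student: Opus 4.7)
The plan is to combine the isolating-block construction of Conley--Easton~\cite{ConleyEaston} with the smooth convex-neighborhood theorem of Guillarmou--Mazzucchelli--Tzou~\cite{GMT}, and then verify property~(3) using strict convexity together with the local maximality of $K$.

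First, since $K$ is locally maximal, fix a neighborhood $V\subset\mathcal M$ of $K$ with $K=\bigcap_{t\in\mathbb R}\varphi^t(V)$. Shrinking $V$, invoke~\cite{ConleyEaston} to obtain, inside $V$, a compact topological isolating block $B$ around $K$: the boundary $\partial B$ decomposes into disjoint entering and exiting portions transverse to $X$, and $K=\bigcap_{t\in\mathbb R}\varphi^t(B)$. Apply the smoothing and convexification procedure of~\cite{GMT} to $B$: this produces an open neighborhood $\mathcal U\subset V$ of $K$ with $C^\infty$ boundary, together with a $C^\infty$ vector field $X_0$ on $\overline{\mathcal U}$, equal to $X$ on some neighborhood $W\subset\mathcal U$ of $K$, for which $\partial\mathcal U$ is strictly convex in the quadratic sense~\eqref{e:quadratic-convexity}. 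Properties~(1) and~(2) are then built in by construction.

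To verify~(3), the inclusion $K\subset\bigcap_{t\in\mathbb R}\varphi_0^t(\mathcal U)$ is immediate from the $\varphi^t$-invariance of $K$ and the equality $X_0=X$ on $W\supset K$. For the reverse inclusion, suppose $x\in\overline{\mathcal U}$ satisfies $\varphi_0^t(x)\in\overline{\mathcal U}$ for all $t\in\mathbb R$. Strict convexity forbids the orbit from ever touching $\partial\mathcal U$: at a putative contact time $t_0$, the function $s\mapsto\rho(\varphi_0^s(x))$ would attain a local maximum at $s=t_0$, forcing $X_0\rho$ to vanish and $X_0^2\rho\leq 0$ at $\varphi_0^{t_0}(x)$, in contradiction with~\eqref{e:quadratic-convexity}. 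Hence the entire orbit lies in the interior $\mathcal U$; provided the construction above is carried out so that every $\varphi_0^t$-invariant orbit in $\overline{\mathcal U}$ is confined to $W$, such an orbit is also a genuine $\varphi^t$-trajectory inside $V$, and therefore lies in $K$ by local maximality.

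The main obstacle is the quantitative content of Step~2: one has to check that the smoothing of the Conley--Easton block and the accompanying modification of the vector field can be carried out inside an arbitrarily small preassigned neighborhood of $K$, so that any $\varphi_0^t$-invariant orbit in $\overline{\mathcal U}$ necessarily lies in the region where $X_0=X$. Once that smallness property is extracted from~\cite{ConleyEaston,GMT}, the verification of~(3) reduces to the short boundary computation with $\rho$ above.
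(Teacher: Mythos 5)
Your proposal follows essentially the same route as the paper's proof: invoke Conley--Easton to produce an isolating block around $K$, then invoke the Guillarmou--Mazzucchelli--Tzou argument to upgrade the topological convexity of the block to the quadratic convexity~\eqref{e:quadratic-convexity} by modifying the vector field, and deduce~(3) by combining strict convexity with local maximality. A few small points where your account drifts from the paper's and where details should be tightened.

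First, your labor division between the two references is slightly off: Conley--Easton~\cite[Theorem~1.5]{ConleyEaston} already produces an isolating block $\mathcal U$ with $C^\infty$ boundary (together with the set $\partial_0\mathcal U$ of tangencies being a smooth codimension-one submanifold of $\partial\mathcal U$ and the topological convexity~\eqref{e:topological-convexity} holding at each tangency). What~\cite[Lemma~2.3]{GMT} supplies is the replacement of topological convexity by quadratic convexity, achieved by modifying $X$ to $X_0$ only in a small collar near $\partial_0\mathcal U$; there is no separate ``smoothing of a topological block'' step to worry about. Second, in your verification that the orbit cannot touch $\partial\mathcal U$, the function $s\mapsto\rho(\varphi_0^s(x))$ attains a local \emph{minimum}, not maximum, at a putative contact time (since $\rho\geq 0$ on $\overline{\mathcal U}$ and $\rho=0$ on $\partial\mathcal U$); the conclusion $X_0\rho=0$ and $X_0^2\rho\geq 0$ still contradicts~\eqref{e:quadratic-convexity}, so the argument goes through once the sign is corrected.

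Finally, the ``main obstacle'' you flag --- ensuring that $\varphi_0^t$-invariant orbits in $\overline{\mathcal U}$ are confined to the region where $X_0=X$ --- is real, but the mechanism is not about taking $\mathcal U$ small. The modification of $X$ happens only in a collar near $\partial_0\mathcal U$ where orbits of $X$ are escaping $\overline{\mathcal U}$ in finite time, and the perturbation is chosen small enough that orbits of $X_0$ also escape there; hence any bi-infinite $\varphi_0^t$-orbit in $\overline{\mathcal U}$ must avoid the collar, is therefore a $\varphi^t$-orbit, and lands in $K$ by local maximality. The paper does not carry out this verification either; it simply cites~\cite[Lemma~2.3]{GMT}, which is exactly where this is checked. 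So your proposal is at the same level of completeness as the paper, and your diagnosis of what must be extracted from the references is accurate.
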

\begin{proof}
The proof follows Guillarmou--Mazzucchelli--Tzou~\cite[Lemma~2.3]{GMT}.
We first use a result of Conley--Easton~\cite[Theorem~1.5]{ConleyEaston}.
Since $K$ is locally maximal for $\varphi^t$, there exists an open set $V\subset\mathcal M$
containing $K$ such that $K=\bigcap_{t\in\mathbb R}\varphi^t(\overline V)$.
If $x\in \partial V$, then in particular $x\notin K$, thus there exists
$t\in\mathbb R$ such that $\varphi^t(x)\notin\overline V$.
Therefore $V$ is an isolating neighborhood for $\varphi^t$ in the sense
of~\cite[Definition~1.1]{ConleyEaston}
and $K$ is an isolated invariant set in the sense of~\cite[Definition~1.2]{ConleyEaston}.
Therefore by~\cite[Theorem~1.5]{ConleyEaston} there exists an isolating block,
which is an open subset $\mathcal U\subset \mathcal M$ with $K\subset \mathcal U$ with
the following properties:
\begin{itemize}
\item $\mathcal U$ has compact closure and $C^\infty$ boundary,
that is $\mathcal U=\{x\in\mathcal M\mid \rho(x)>0\}$ for some function
$\rho\in C^\infty(\mathcal M;\mathbb R)$ such that $d\rho\neq 0$ on $\partial \mathcal U=\{x\in\mathcal M\mid \rho(x)=0\}$.
\item the set $\partial_0\mathcal U:=\{x\in\partial\mathcal U\mid X_0\rho(x)=0\}$
is a codimension 1 smooth submanifold of $\partial\mathcal U$;
\item for each $x\in \partial_0\mathcal U$, there exists $\varepsilon>0$
such that
\begin{equation}
  \label{e:topological-convexity}
\varphi^t(x)\notin\overline{\mathcal U}\quad\text{for all}\quad t\in (-\varepsilon,\varepsilon)\setminus \{0\}.
\end{equation}
\end{itemize}
Now the vector field $X_0$ is obtained by modifying $X$ slightly near $\partial_0\mathcal U$
so that we still have $K=\bigcap_{t\in\mathbb R}\varphi_0^t(\overline {\mathcal U})$
and the topological (local) convexity condition~\eqref{e:topological-convexity}
is replaced by the quadratic differential convexity condition~\eqref{e:quadratic-convexity}.
We refer to the proof of~\cite[Lemma~2.3]{GMT} for details.
\end{proof}
We now give
\begin{proof}[Proof of Proposition~\ref{l:basic-case}]
By part~(3) of Lemma~\ref{l:convex-embed}, every closed trajectory of $\varphi^t_0$ in $\overline{\mathcal U}$
lies in $K$ and thus is a closed trajectory of $\varphi^t$. Therefore the Ruelle
zeta function $\zeta_K$ is equal to the Ruelle zeta function of $\varphi^t_0$ on $\overline{\mathcal U}$.
Now $(\mathcal U,\varphi^t_0)$ is an \emph{open hyperbolic system}
in the sense of~\cite[Assumptions~(A1)--(A4)]{rnc}. Therefore~\cite[Theorem~3]{rnc} applies
(with potential set to~0) and gives a meromorphic continuation of $\zeta_K$ to $\mathbb C$.
\end{proof}
The main theorem now follows using the spectral
decomposition theorem, Proposition~\ref{l:spectral}. Indeed,
if $\mathcal K=K_1\sqcup\dots \sqcup K_N$ is the spectral decomposition
of $\varphi^t$, then each closed trajectory of $\varphi^t$ is contained
in one of the sets $K_j$. Therefore the Ruelle zeta function~\eqref{e:zeta-r}
factorizes as
$$
\zeta(\lambda)=\zeta_{K_1}(\lambda)\cdots\zeta_{K_N}(\lambda).
$$
Since each $\zeta_{K_j}$ admits a meromorphic continuation to $\mathbb C$
by Proposition~\ref{l:basic-case}, the function $\zeta$ admits a meromorphic
continuation to $\mathbb C$ as well.

\medskip\noindent\textbf{Acknowledgements.}
This research was conducted during the period SD served as
a Clay Research Fellow.
CG was partially supported by the ANR project ANR-13-JS01-0006
 and by the ERC consolidator grant IPFLOW.


\end{document}